\numberwithin{equation}{section}
\theoremstyle{plain}
\newtheorem{thm}{Theorem}
\newtheorem{lemma}{Lemma}
\newtheorem{corollary}{Corollary}
\numberwithin{thm}{section}
\numberwithin{lemma}{section}
\numberwithin{st}{section}
\numberwithin{corollary}{section}
\theoremstyle{definition}
\theoremstyle{remark}
\newcommand{\B}{\mathbb{B}}
\newcommand{\R}{\mathbb{R}}
\newcommand{\dd}{\mathrm{d}}
\title{Integral Identities for the boundary of a convex body}
\keywords{Blaschke--Petkantschin formula, Zähle formula, Pleijel identity, Ambartzumian-Pleijel identity, random chord}
\subjclass[2020]{primary: 60D05, 53C65; secondary: 52A22, 52B11, 26B15.}
\author[T.~Moseeva]{Tatiana Moseeva}
\address{Tatiana Moseeva, Leonhard Euler International Mathematical Institute, Russia}
\email{polezina@yandex.ru}
\begin{document}

\thanks{The work was supported by the Foundation for the
Advancement of Theoretical Physics and Mathematics ``BASIS'' and by Ministry of Science and Higher Education of the Russian Federation, agreement № 075-15-2022- 287}

\begin{abstract}
We present the multidimensional versions of the Pleijel and Ambartzumian--Pleijel identities. We also obtain the generalization of both the Blaschke--Petkantschin and Zähle formulae considering the case when some points are chosen inside the convex body and some on the boundary. Moreover,  a version of the Zähle formula for the  polytopes is derived.
\end{abstract}

\maketitle

\section{Introduction and main results}

For a convex body $K$ in $\R^d$ denote by $|K|$ its volume (the $d$-dimensional Lebesgue measure). Denote by $\partial K$ the boundary of $K$ equipped with the $(d-1)$-dimensional Hausdorff measure $\sigma_{\partial K}$.  When it causes no confusion, for abbreviation we use just $\sigma$ instead of $\sigma_{\partial K}$. Also for simplicity we write $|\partial K|$ instead of $\sigma(\partial K)$. 

For $l \in \{1, \ldots, d\}$ denote by $A_{d,l}$ the set of all affine $l$-planes in $\R^d$ equipped with the unique Haar measure $\mu_{d,l}$ invariant with respect to the rigid motions and normalized by
\begin{align}\label{2104}
    \mu_{d,l}\left(\left\{E\in A_{d,l}:\,E\cap \B^d\ne\emptyset\right\}\right)=\kappa_{d-l},
\end{align}
where $\B^k$ is  the $k$-dimensional unit ball and $\kappa_k:=|\mathbb B^k|$. Also let $\omega_k:=|\mathbb{S}^{k-1}|=k\kappa_k$, where $\mathbb{S}^{k-1}=\partial \B^k$ is the $(k-1)$-dimensional unit sphere.

Given a finite number of points $x_1,\dots,x_n\in\R^d$ denote by $[x_1,\dots,x_n]$ their convex hull. In particular, $[x_1,x_2]$ is
the segment with endpoints $x_1$ and $x_2$.

\bigskip

We start with the case $d=2$.
Let $K$ be a planar convex body with $C^1$ boundary. Consider a  line $G$ intersecting $K$ chosen with respect to the measure $\mu_{2,1}$.
In \cite{Pleijel} Pleijel discovered an identity
expressing the integral functional of the length of the chord $G\cap K$ in terms of the double integration over the boundary of $K$: 
\begin{align}\label{eq:pleijel}
    \int\limits_{G \cap K \neq \emptyset} &h(|G \cap K|) \mu_{2,1}(\dd G)  \\ \notag
    &= \frac{1}{2}\int\limits_{(\partial K)^2} h'(|x_1 - x_2|)\cos\alpha_1\cos\alpha_2\, \sigma(\dd x_1)\sigma(\dd x_2),
\end{align}
where  $h:\R^1_+\to\R^1$ is any function with continuous derivative  such that $h(0)=0$ and $\alpha_1, \alpha_2$ are angles between tangents at points $x_1,x_2$ and the chord  $[x_1,x_2]$.
Here we consider the  tangents 
lying on the same side of the chord. Moreover, Pleijel showed that
\begin{align}\label{eq:pleijelcot}
    \int\limits_{G \cap K \neq \emptyset} &h(|G \cap K|) \mu_{2,1}(\dd G)  \\ \notag 
    &= \int\limits_{G \cap K \neq \emptyset} h'(|G \cap K|)|G \cap K|\cot\alpha_1\cot\alpha_2 \mu_{2,1}(\dd G).
\end{align}
From~\eqref{eq:pleijel} it is possible to derive (see, e.g., \cite[Section 6.9]{AMS89}) an explicit form of the defect in the isoperimetric inequality:
\begin{align*}
    |\partial K|^2 - 4\pi |K| = 2\int\limits_{(\partial K)^2} \sin^2\frac{\alpha_1 - \alpha_2}{2} \sigma(\dd x_1)\sigma(\dd x_2). 
\end{align*}

In \cite{AMB}, Ambartzumian gave a combinatorial proof of the Pleijel identity and presented  its version  for the convex planar polygons which is now known as the Ambartzumian--Pleijel identity: if $P$ is a convex polygon with side lengths $a_1, \ldots, a_n$, then 
\begin{align}\label{eq:ambpleijelcot}
    &\int\limits_{G \cap P \neq \emptyset} h(|G \cap P|) \mu_{2,1}(\dd G) \\ \notag 
    &= \int\limits_{G \cap P \neq \emptyset} h'(|G \cap P|)|G \cap P|\cot\alpha_1\cot\alpha_2 \mu_{2,1}(\dd G) + \sum_{i = 1}^n \int\limits_{0}^{a_i} h(t) \mathrm{d}t.
\end{align}

There exists an analogue of the Pleijel identity for the convex bodies with  smooth boundary in $\R^3$, see \cite{AMB82}:
\begin{align*}
    \int\limits_{G \cap K \neq \emptyset}& h(|G \cap K|) \mu_{3,1}(\dd G)  \\ \notag 
    &= 4\int\limits_{(\partial K)^2} \frac{h'(|x_1 - x_2|)}{|x_1 - x_2|}\cos\alpha_1\cos\alpha_2\cos\phi_0 \sigma(\dd x_1)\sigma(\dd x_2),
\end{align*}
where $\alpha_1, \alpha_2$ are the angles between the line containing the chord $[x_1,x_2]$ and its projections onto the tangent planes, and $\phi_0$ is the angle between projections of normal vectors at points $x_1$ and $x_2$ onto the orthogonal complement of the line passing through the points $x_1,x_2$.

Our first result is the following generalization of the Pleijel identity to arbitrary dimension.
\begin{thm}\label{thm1}
Let K be a convex body in $\mathbb{R}^d$ with $C^1$ boundary. Then for every function $h: \mathbb{R} \rightarrow \mathbb{R}$ with continuous derivative such that $h(0)=0$ we have
\begin{align*}
    \int\limits_{G \cap K \neq \emptyset} &h(|G \cap K|) \mu_{d,1}(\dd G) \\ \notag  
    &= \frac{1}{(d-1)\omega_d}\int\limits_{(\partial K)^2} \frac{h'(|x_1 - x_2|)}{|x_1 - x_2|^{d-2}}\cos\alpha_1\cos\alpha_2\cos\phi_0 \sigma(\dd x_1)\sigma(\dd x_2),
\end{align*}
where  $\alpha_1,\alpha_2$ are the angles between the line containing the chord $[x_1,x_2]$  and its projections onto the tangent planes, and $\phi_0$ is the angle between projections of normal vectors at points $x_1$ and $x_2$ onto the orthogonal complement of the line passing through the points $x_1,x_2$.
\end{thm}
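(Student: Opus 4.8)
The plan is to exploit the fact that for a convex body with $C^1$ boundary almost every line $G$ meeting $K$ crosses $\partial K$ in exactly two points $x_1,x_2$, which are the endpoints of the chord, so that $|G\cap K|=|x_1-x_2|$. Integrating over lines is therefore the same as integrating over pairs of boundary points, and the real content of the theorem is (i) the conversion of $h$ into $h'$ and (ii) the computation of the Jacobian of the correspondence, which must produce the factor $\cos\alpha_1\cos\alpha_2\cos\phi_0\,|x_1-x_2|^{-(d-2)}$. The mechanism for (i) is the fundamental theorem of calculus together with $h(0)=0$: for a chord with endpoints $a,b$ and length $\ell$ one has $h(\ell)=\int_{G\cap K}h'(|a-y|)\,\lambda_G(\dd y)$, where $\lambda_G$ is length along $G$, and averaging over the two endpoints gives $2h(\ell)=\int_{G\cap K}\bigl[h'(|a-y|)+h'(|b-y|)\bigr]\lambda_G(\dd y)$. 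This introduces an auxiliary point $y$ ranging over the interior of $K$, which is exactly the ``one interior, one boundary point'' situation treated by the generalized Blaschke--Petkantschin/Zähle machinery of the preceding sections.

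First I would disintegrate the measure $\mu_{d,1}$ into a direction and a footpoint. Normalizing through~\eqref{2104} yields $\int_{\,\cdot\,}\!f\,\dd\mu_{d,1}=\frac{1}{\omega_d}\int_{\mathbb{S}^{d-1}}\!\int_{u^{\perp}}f\,\lambda_{u^{\perp}}(\dd w)\,\dd u$, and adjoining the point $y$ along the line turns $\mu_{d,1}(\dd G)\,\lambda_G(\dd y)$ into $\frac{1}{\omega_d}\,\dd y\,\dd u$. For fixed interior $y$ the ray in direction $u$ meets $\partial K$ at a point $x$, and the standard foreshortening relation $\dd u=\dfrac{(x-y)\cdot n(x)}{|x-y|^{d}}\,\sigma(\dd x)$ (with $n(x)$ the outer normal) converts the direction integral into a boundary integral. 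Carrying this out should give the intermediate identity
\begin{align*}
    \int\limits_{G\cap K\ne\emptyset} h(|G\cap K|)\,\mu_{d,1}(\dd G)
    =\frac{1}{\omega_d}\int\limits_{\partial K}\int\limits_{K}
    \frac{h'(|x-y|)}{|x-y|^{d-1}}\,\frac{(x-y)\cdot n(x)}{|x-y|}\,\dd y\,\sigma(\dd x),
\end{align*}
in which one point already lies on $\partial K$ and $h$ has been replaced by $h'$.

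The remaining step is to push the interior point $y$ onto the boundary, producing the second factor $x_2$. The weight above equals $n(x)\cdot\nabla_y\Psi(|x-y|)$ with $\Psi'(r)=-h'(r)/r^{d-1}$, so the integrand is $\operatorname{div}_y\bigl(\Psi\,n(x)\bigr)$; applying the generalized Zähle/divergence formula in $y$ over $K$ transfers the integral to $\partial K$ and makes the inner product $n(x_1)\cdot n(x_2)$ of the two normals appear. Decomposing each normal into its component along the chord direction $u=(x_2-x_1)/|x_1-x_2|$ and its component in $u^{\perp}$ gives $n_1\cdot n_2=(n_1\cdot u)(n_2\cdot u)+\cos\alpha_1\cos\alpha_2\cos\phi_0$, since $|n_i\cdot u|=\sin\alpha_i$, the transverse parts have lengths $\cos\alpha_i$, and $\phi_0$ is precisely the angle between them in $u^{\perp}$. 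The affine Blaschke--Petkantschin Jacobian $|x_1-x_2|^{d-1}$ for two collinear points, combined with the foreshortening factors $\propto 1/\sin\alpha_i$ already built into the boundary passage, is what collapses the power to $|x_1-x_2|^{-(d-2)}$ and fixes the constant as $1/((d-1)\omega_d)$.

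The hard part will be the last transfer: showing that only the transverse term $\cos\alpha_1\cos\alpha_2\cos\phi_0$ survives, i.e.\ that the longitudinal contribution $(n_1\cdot u)(n_2\cdot u)=\pm\sin\alpha_1\sin\alpha_2$ does not enter the final answer. This is exactly the point where the line version of the Blaschke--Petkantschin kernel matters, because the change of variables for lines intrinsically projects directions and normals onto $u^{\perp}$, so that the longitudinal part is never generated; equivalently, in the potential-theoretic picture it is cancelled by the local contribution of the divergence theorem at the integrable singularity $y=x$, which must be regularized with care. Controlling that diagonal term and verifying the exact constant (most easily by testing the identity on $K=\B^d$, and by checking that it reduces to~\eqref{eq:pleijel} for $d=2$ and to the $\R^3$ formula of \cite{AMB82} for $d=3$) is the main technical obstacle; the rest is the bookkeeping of the $C^1$ change of variables and the dominated-convergence argument needed to handle the boundary of the domain of directions.
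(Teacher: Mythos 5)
Your first half is correct: disintegrating $\mu_{d,1}(\dd G)\,\lambda_G(\dd y)$ into $\frac{1}{\omega_d}\,\dd y\,\dd u$ and trading the direction $u$ for the boundary point via $\dd u=\frac{(x-y)\cdot n_K(x)}{|x-y|^{d}}\,\sigma(\dd x)$ does yield your intermediate identity, which is essentially the $k=l=1$ case of Theorem~\ref{thm3} (cf.\ Corollary~\ref{2251}). Note that the paper proves Theorem~\ref{thm1} by a completely different route (the flag formula~\eqref{eq:flags} to reduce to the planar Pleijel identity~\eqref{eq:pleijelcot} on random $2$-planes, then Lemma~\ref{cotlemma} and a spherical average), so if your second half worked it would be a genuinely alternative proof. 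It does not work as described, however. The divergence theorem applied to $F(y)=\Psi(|x_1-y|)\,n_K(x_1)$ with $\Psi'(r)=-h'(r)/r^{d-1}$ gives
\begin{align*}
\int\limits_{K} \frac{h'(|x_1-y|)}{|x_1-y|^{d-1}}\,\frac{(x_1-y)\cdot n_K(x_1)}{|x_1-y|}\,\dd y
=\int\limits_{\partial K}\Psi(|x_1-x_2|)\,\bigl(n_K(x_1)\cdot n_K(x_2)\bigr)\,\sigma(\dd x_2),
\end{align*}
so the kernel you actually produce is $\Psi(r)\,(n_1\cdot n_2)$, which differs from the target in two ways: the radial factor is the antiderivative-type function $\Psi$, not $h'(r)/r^{d-2}$, and the full inner product $n_1\cdot n_2=\cos\alpha_1\cos\alpha_2\cos\phi_0-\sin\alpha_1\sin\alpha_2$ appears. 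Your two proposed mechanisms for killing the longitudinal term both fail: the flux of $F$ through the small sphere excised around the singularity $y=x_1$ is $O\bigl(\Psi(\varepsilon)\varepsilon^{d-1}\bigr)=O(\varepsilon)$, so it vanishes and cancels nothing; and ``the change of variables intrinsically projects onto $u^\perp$'' is not a property of the divergence-theorem route, where the longitudinal term is genuinely generated.

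The missing idea is that the $\sin\alpha_1\sin\alpha_2$ term must be \emph{reabsorbed into the line integral}, not cancelled. By the $l=1$ Z\"ahle formula~\eqref{eq:zahle2p},
\begin{align*}
\frac{1}{\omega_d}\int\limits_{(\partial K)^2}\Psi(|x_1-x_2|)\sin\alpha_1\sin\alpha_2\,\sigma(\dd x_1)\sigma(\dd x_2)
=\int\limits_{A_{d,1}}\Psi(|G\cap K|)\,|G\cap K|^{d-1}\,\mu_{d,1}(\dd G),
\end{align*}
so moving this to the left-hand side one obtains exactly Theorem~\ref{thm1}, but for the modified function $g(\ell):=h(\ell)+\Psi(\ell)\ell^{d-1}$, since $g'(\ell)=(d-1)\Psi(\ell)\ell^{d-2}$ (the $h'$ terms cancel). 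To recover the statement for an arbitrary $C^1$ function $g$ with $g(0)=0$ you must invert this re-parametrization, i.e.\ solve $h'=\frac{(d-2)g'-\ell g''}{d-1}$, which requires $g\in C^2$ and then an approximation argument, and you must track the additive constant in $\Psi$ (harmless in the divergence step since $\int_{\partial K}n_K\,\dd\sigma=0$, but it shifts $g$ by a multiple of $\ell^{d-1}$). None of this appears in your proposal; as written, the argument establishes a correct but different identity and stops one essential idea short of the theorem.
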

To formulate our further results, we need to extend some of our notation to subsets of lower dimension. Given some $E\in A_{d,l}$ and $k\in\{1,\dots,l\}$ denote by $A_{E,k}$ the set of all affine $k$-planes in $E\cong\R^l$ equipped with the unique Haar measure $\mu_{E,k}$ invariant with respect to the rigid motions in $E$ and normalized the same way as in~\eqref{2104}. Now for an arbitrary subset $M\subset\R^d$ we write
\begin{align*}
    A_{M,k}=A_{\mathrm{aff}M,k}, \quad \mu_{M,k}=\mu_{\mathrm{aff}M,k},
\end{align*}
where $\mathrm{aff}M$ denotes the affine span of $M$, i.e. the intersection of all affine planes containing $M$. We also denote by $\lambda_E$ the $l$-dimensional Lebesgue measure in $E$ and we denote $\lambda_E(|K\cap E|)$ briefly by $|K\cap E|$. Analogously, given points $x_0,\dots, x_l\in\R^d$, we write $|[x_0,\dots, x_l]|$ for the $l$-dimensional Lebesgue measure of their convex hull.

Our next result generalizes the Ambartzumian--Pleijel identity (see~\eqref{eq:ambpleijelcot}) to higher dimensions. Given a convex polytope $P$ denote by $\mathcal F(P)$ the set of its facets.
\begin{thm}\label{thm2}
Let P be a convex polytope in $\mathbb{R}^d$. Then for every function $h: \mathbb{R} \rightarrow \mathbb{R}$ with continuous derivative such that $h(0)=0$ we have
\begin{align*}
    \int\limits_{G \cap P \neq \emptyset} &h(|G \cap P|) \mu_{d,1}(\dd G) = \frac{1}{(d-1)}\int\limits_{G \cap P \neq \emptyset} h'(|G \cap P|)|G \cap P|\cot\alpha_1\cot\alpha_2\cos\phi_0 \mu_{d,1}(\dd G)  \\ \notag &+ \sum_{F\in\mathcal F(P)} \int\limits_{A_{F,1}} H(|G \cap F|) \mu_{F,1}(\mathrm{d}G),
\end{align*}
where $\alpha_1,\alpha_2$ are the angles between the line $G$ and its projections onto the tangent planes,  $\phi_0$ is the angle between projections of normal vectors at endpoints of $G \cap P$ onto the orthogonal complement of line $G$, and $H$ is the antiderivative of $h$. 
\end{thm}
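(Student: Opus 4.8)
The plan is to prove Theorem~\ref{thm2} by the same integral-geometric strategy that underlies the multidimensional Pleijel identity (Theorem~\ref{thm1}), but tracking the boundary contributions that the polytopal structure produces. The key device is a Blaschke--Petkantschin-type decomposition of the line measure $\mu_{d,1}$. A line $G$ meeting $P$ is determined by its direction and by the point where it pierces a fixed hyperplane; equivalently, one parametrizes $G$ by its two intersection points with $\partial P$. I would first write, for a fixed function $h$ with $h(0)=0$, the chord integral as an iterated integral: integrate first over all lines $G$ with a prescribed direction $u\in\mathbb S^{d-1}$ and then over $u$. For almost every direction the line $G$ enters and exits $P$ through the relative interiors of two facets $F_1,F_2\in\mathcal F(P)$, so the boundary parametrization is piecewise smooth, with the singular set (lines hitting lower-dimensional faces) carrying zero measure.

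Next I would carry out the change of variables from the line $G$ to its two endpoints $x_1\in F_1$, $x_2\in F_2$ on the boundary. This is where the factor $\cos\alpha_i$ and the interfacial angle $\cos\phi_0$ appear: the Jacobian of the map $(x_1,x_2)\mapsto G$ is exactly the one recorded in Theorem~\ref{thm1}, namely $\tfrac{1}{(d-1)\omega_d}\,|x_1-x_2|^{-(d-2)}\cos\alpha_1\cos\alpha_2\cos\phi_0$, because locally each facet is a piece of $C^1$ (indeed affine) boundary and the Pleijel Jacobian is a purely local computation on the tangent planes. Writing $t=|G\cap P|=|x_1-x_2|$ and differentiating, I would use the fundamental theorem of calculus in the form $h(t)=\int_0^t h'(s)\,\dd s$ to prepare for the comparison between the $h$-integral on the left and the $h'$-weighted integral on the right.

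The heart of the argument, and the step I expect to be the main obstacle, is the passage between the two expressions of the chord integral --- the one weighted by $h(|G\cap P|)$ and the one weighted by $h'(|G\cap P|)\,|G\cap P|\cot\alpha_1\cot\alpha_2\cos\phi_0$ --- and the precise accounting of the facet defect terms. On the smooth boundary this passage is an integration by parts (as in the planar Pleijel identity, where $\cos\alpha/\sin\alpha=\cot\alpha$ converts the $\cos$-weighted double boundary integral into the $\cot$-weighted line integral). For a polytope the same integration by parts over each facet produces, in addition to the interior line integral, boundary terms supported on the relative boundaries of the facets. The plan is to identify these terms: integrating $h'$ against the chord, the boundary contribution collapses onto the facets themselves and yields $\int_{A_{F,1}} H(|G\cap F|)\,\mu_{F,1}(\dd G)$ for each $F\in\mathcal F(P)$, where $H$ is the antiderivative of $h$ --- exactly the polytopal correction. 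I would verify this by localizing near a single facet $F$: parametrize lines $G$ piercing $P$ whose endpoint on $F$ approaches $\partial F$, and show that the limiting flux through $F$ reproduces the $(d-1)$-dimensional chord integral of $H$ over lines in $\mathrm{aff}\,F$ with respect to $\mu_{F,1}$.

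To make the boundary bookkeeping rigorous I would treat $P$ as a limit of convex bodies with $C^1$ boundary, or equivalently smooth the polytope near its edges and pass to the limit, checking that the interior integral converges to the $\cot$-weighted expression and that the smoothed edge regions contribute, in the limit, precisely the facet sum $\sum_{F\in\mathcal F(P)}\int_{A_{F,1}} H(|G\cap F|)\,\mu_{F,1}(\dd G)$. The dominated-convergence estimates and the identification of the limiting facet measure with $\mu_{F,1}$ constitute the technical core; once the one-facet model computation is in hand, summing over all facets of $P$ and combining with the interior term gives the stated identity.
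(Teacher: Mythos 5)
Your proposal takes a different route from the paper --- the paper never parametrizes lines by their boundary endpoints for the polytope; it reduces everything to the planar case via the flag-space Fubini theorem \eqref{eq:flags} --- but as written your plan has two genuine gaps, and they sit exactly at the points you label as ``the main obstacle'' and ``the technical core.''

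First, the kernel $\tfrac{1}{(d-1)\omega_d}|x_1-x_2|^{-(d-2)}\cos\alpha_1\cos\alpha_2\cos\phi_0$ is \emph{not} the Jacobian of the map $(x_1,x_2)\mapsto G$. The actual Jacobian is the Z\"ahle kernel of \eqref{eq:zahle2p}, namely $\tfrac{1}{\omega_d}\sin\alpha_1\sin\alpha_2\,|x_1-x_2|^{-(d-1)}$. The kernel appearing in Theorem~\ref{thm1} is not a local change-of-variables factor at all: it already incorporates the integration by parts (which is why $h$ on the left becomes $h'$ on the right) \emph{and} an average over the $2$-planes containing the chord, which is where the factor $\cos\phi_0/(d-1)$ comes from (in the paper, via Lemma~\ref{cotlemma} and the spherical integral \eqref{prodint}). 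A proof that starts by treating this kernel as a ``purely local computation on the tangent planes'' cannot be completed as described.

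Second, the decisive step is deferred rather than executed. The assertion that the boundary terms of your integration by parts ``collapse onto the facets and yield $\int_{A_{F,1}}H(|G\cap F|)\,\mu_{F,1}(\dd G)$'' \emph{is} the content of the theorem; you give no mechanism producing this term, the factor $1/(d-1)$, or the identification of the limiting measure with $\mu_{F,1}$ --- and the normalization of that facet term is precisely where all the delicacy of the statement lies. The same criticism applies to your fallback smoothing strategy: if you apply Theorem~\ref{thm1} to a $C^1$-smoothed polytope and pass to the limit, the pairs of boundary points lying on the same facet (for which $\alpha_1=\alpha_2=\phi_0=0$) survive in the limit and contribute $\tfrac{1}{(d-1)\omega_d}\int_{F^2}h'(|x_1-x_2|)\,|x_1-x_2|^{-(d-2)}\,\dd x_1\dd x_2$ per facet; converting this into a chord integral over $A_{F,1}$ requires a further Blaschke--Petkantschin computation inside $\mathrm{aff}\,F$ (with its own constant $b_{d-1,1}$), a step your sketch never mentions, and it is exactly this computation that fixes the coefficient of the facet term. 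By contrast, the paper's proof imports all constants from the known planar identity: it uses \eqref{eq:flags} to write $\mu_{d,1}$ as an integral over $2$-planes $E$, applies the planar Ambartzumian--Pleijel identity \eqref{eq:ambpleijelcot} to the polygon $P\cap E$, and converts back, with Lemma~\ref{cotlemma} plus the spherical average handling the $\cot$-term and the side-sum turning into the facet sum (\eqref{APfacets}). Your outline is a plausible program for an alternative proof, but in its present form every step that actually produces the identity is a placeholder.
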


The following Blaschke--Petkantschin formula  (see, e.g.,~\cite{SW08}, Theorem 7.2.7) is widely used when dealing with the convex hulls of random points in a given convex body:
if $h:(\mathbb{R}^d)^{l+1} \rightarrow \mathbb R$ is а non-negative  measurable function, $l~\in \left\{1, . . . , d \right\}$, then 
\begin{align}\label{BPF}
    &\int\limits_{(\mathbb{R}^d)^{l+1}}{h(x_0, \ldots , x_l) \mathrm{d}x_0 \ldots \mathrm{d}x_l}  \\ \notag
    &=(l!)^{d-l}b_{d,l}\int\limits_{A_{d,l}}{\int\limits_{E^{l+1}}{h(x_0, \ldots , x_l){|[x_0, \ldots, x_l]|}^{d-l}\lambda_{E}(\mathrm{d}x_0)\ldots \lambda_E(\mathrm{d}x_l)}\mu_{d,l}(\mathrm{d}E)},
\end{align}
where $b_{d,l} = \frac{\omega_{d-k+1} \cdots \omega_{d}}{\omega_1 \cdots \omega_l}$.

To prove Theorem \ref{thm1}  we will need the following Z\"ahle formula which is a counterpart of the  Blaschke--Petkantschin formula for points distributed over the boundary of the convex body (\cite{Zahle}, \cite{Reitzner}):
if $K$ is a convex body in $\R^d$ and $h:(\mathbb{R}^d)^{l+1} \rightarrow \mathbb R$ is a measurable function, $l\leq d$, then 
\begin{align}\label{Zahle}
    &\int\limits_{A_{d,l}}{\int\limits_{(E\cap  \partial K)^{l+1}}h(x_0, \ldots , x_l) 
    \mathbbm{1}_{\{x_0, \ldots, x_l \;\text{ in general position}\}} \sigma_{E\cap \partial K}(\mathrm{d}x_0)\ldots \sigma_{E\cap \partial K}(\mathrm{d}x_l)}\mu_{d,l}(\mathrm{d}E) \\ \notag
    &= \frac{1}{(l!)^{d-l}b_{d,l}}
    \int\limits_{(\partial K)^{l+1}} h(x_0, \ldots , x_l) \mathbbm{1}_{\{x_0, \ldots, x_l\; \text{ in general position}\}} \frac{1}{|[x_0, \ldots, x_l]|^{d-l}}  \\ \notag
    &\times\prod_{j=0}^{l}\|\mathrm{P}_{\mathrm{aff}(x_0, \ldots, x_l)}(n_K(x_j))\| 
    \sigma(\mathrm{d}x_0) \ldots \sigma(\mathrm{d}x_l),
\end{align}
where $n_K(x)$ denotes the outer unit normal vector to $\partial K$ at $x$ and $\mathrm{P}_E$ is the orthogonal projection onto the plane $E$.

\bigskip

Let us present the following generalisation of both the Blaschke--Petkantschin and Z\"ahle formulae for convex bodies with smooth boundary.

\begin{thm}\label{thm3}

    Let $K$ be a convex body with smooth boundary and let $h(x_0,\ldots, x_l)$ be a continuous function, $l\leq d$. Then for all $k\in\{0,\dots, l+1\}$ we have
\begin{align*}
     &\int\limits_{(\partial K)^{k}}\int\limits_{(K)^{l-k+1}}h(x_0, \ldots , x_l) \mathrm{d}x_{0} \ldots \mathrm{d}x_{l-k} 
     \sigma(\mathrm{d}x_{l-k+1}) \ldots 
     \sigma(\mathrm{d}x_l)  \\ 
    &=(l!)^{d-l}b_{d,l}\int\limits_{A_{d,l}}
    \int\limits_{(E\cap \partial K)^{k}}
    \int\limits_{(E \cap K)^{l-k+1}}h(x_0, \ldots , x_l)
    {|[x_0, \ldots, x_l]|}^{d-l} \\
    &\times \prod_{j=l-k+1}^{l}\|\mathrm{P}_E(n_K(x_j))\|^{-1} \lambda_{E}(\mathrm{d}x_0)\ldots \lambda_E(\mathrm{d}x_{l-k}) 
    \sigma_{E\cap \partial K}(\mathrm{d}x_{l-k+1})\ldots 
    \sigma_{E\cap \partial K}(\mathrm{d}x_l)\mu_{d,l}(\mathrm{d}E),
\end{align*}
where $n_K(x)$ denotes the outer unit normal vector to $\partial K$ at $x$ and $\mathrm{P}_E$ is the orthogonal projection onto the hyperplane $E$.

\end{thm}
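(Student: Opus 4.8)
The plan is to prove the identity by a single change-of-variables (coarea) argument that contains both~\eqref{BPF} and~\eqref{Zahle} as the extreme cases $k=0$ and $k=l+1$. For $k=0$ the boundary integrations and the product $\prod_{j=l+1}^{l}$ are empty and the statement is precisely the Blaschke--Petkantschin formula~\eqref{BPF} restricted to $K$; for $k=l+1$ there are no interior points, and substituting
\[
g(x_0,\ldots,x_l)=h(x_0,\ldots,x_l)\,|[x_0,\ldots,x_l]|^{d-l}\prod_{j=0}^{l}\|\mathrm{P}_E(n_K(x_j))\|^{-1}, \qquad E=\mathrm{aff}(x_0,\ldots,x_l),
\]
into the Z\"ahle formula~\eqref{Zahle} reproduces the stated identity. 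These checks fix the constant $(l!)^{d-l}b_{d,l}$ and show what the general statement must interpolate.

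For general $k$ I would work on the incidence set
\[
\mathcal I=\bigl\{(x_0,\ldots,x_l,E):\ E\in A_{d,l},\ x_0,\ldots,x_{l-k}\in E\cap K,\ x_{l-k+1},\ldots,x_l\in E\cap\partial K\bigr\},
\]
together with its two projections: onto the configuration space $K^{\,l-k+1}\times(\partial K)^{k}$, which off a null set is a bijection because a generic tuple spans a unique plane $E=\mathrm{aff}(x_0,\ldots,x_l)$, and onto $A_{d,l}$, whose fibre over $E$ is $(E\cap K)^{l-k+1}\times(E\cap\partial K)^{k}$. Comparing the two disintegrations of the product measure $\lambda^{\otimes(l-k+1)}\otimes\sigma^{\otimes k}$ reduces the theorem to computing the Jacobian of
\[
(x_0,\ldots,x_l)\ \longmapsto\ \bigl(E,\ \text{positions of }x_0,\ldots,x_l\text{ inside }E\bigr).
\]
I expect this Jacobian to split into a plane-tilt part and one factor per point. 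The plane-tilt part depends only on $E$ and on the relative position of the points inside it, and therefore produces the same volume power $|[x_0,\ldots,x_l]|^{d-l}$ (and the same constant) that already occurs in both~\eqref{BPF} and~\eqref{Zahle}. An interior point ranges over $E\cap K$ against $\lambda_E$ and contributes nothing further, whereas a boundary point is confined to the slice $E\cap\partial K$, and relating the ambient surface measure $\sigma$ on $\partial K$ to the slice measure $\sigma_{E\cap\partial K}$ introduces exactly the reciprocal factor $\|\mathrm{P}_E(n_K(x_j))\|^{-1}$ on the plane side: inside $E$ the outward normal of $E\cap\partial K$ is $\mathrm{P}_E(n_K(x_j))/\|\mathrm{P}_E(n_K(x_j))\|$, and the coarea factor for cutting $\partial K$ by the $l$-planes through $x_j$ is the reciprocal of $\|\mathrm{P}_E(n_K(x_j))\|$. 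Multiplying these contributions gives the asserted formula.

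The main obstacle will be to justify this factorization rigorously, i.e. to show that mixing free (interior) and constrained (boundary) points creates no cross-terms and that the volume power is genuinely unaffected by replacing some Lebesgue integrations by surface integrations. The cleanest way I see is to condition on $E$ and argue on the fibre: for fixed $E$ the interior integrations reproduce verbatim the interior Blaschke--Petkantschin Jacobian, since $|[x_0,\ldots,x_l]|$ is a function of $E$ and of the in-plane positions only, while the boundary integrations reproduce verbatim the Z\"ahle slicing factor, because the tangent hyperplane $T_{x_j}\partial K$ and the normal $n_K(x_j)$ meet $E$ exactly as in the all-boundary derivation. An alternative that avoids re-deriving the common part is to organize the points into the flag $\mathrm{aff}(x_{l-k+1},\ldots,x_l)\subset\mathrm{aff}(x_0,\ldots,x_l)$, applying the Z\"ahle-type disintegration to the $k$ boundary points (which span a $(k-1)$-plane) and the interior Blaschke--Petkantschin disintegration to the remaining $l-k+1$ points in two nested steps; I would fall back on the direct coarea computation if the measure-normalization bookkeeping of the nested version becomes unwieldy.
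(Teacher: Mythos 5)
Your consistency checks at $k=0$ and $k=l+1$ are correct, and your heuristic for where each factor comes from is sound, but the proposal stops exactly where the proof has to start. The whole content of Theorem~\ref{thm3} is the claim that the disintegration of $\lambda^{\otimes(l-k+1)}\otimes\sigma^{\otimes k}$ over $A_{d,l}$ has fibre density $(l!)^{d-l}b_{d,l}\,|[x_0,\ldots,x_l]|^{d-l}\prod_{j}\|\mathrm{P}_E(n_K(x_j))\|^{-1}$; saying that the Jacobian ``splits into a plane-tilt part and one factor per point'' with ``no cross-terms'' is a restatement of that claim, not an argument for it, as you yourself concede by calling the factorization ``the main obstacle.'' Your proposed resolution --- condition on $E$ and let the interior points reproduce the Blaschke--Petkantschin Jacobian while the boundary points reproduce the Z\"ahle factor --- is circular: the fibre measures of the conditioning on $E$ are precisely the object to be computed, and since $E=\mathrm{aff}(x_0,\ldots,x_l)$ depends on all the points jointly (an infinitesimal move of an interior point tilts $E$ and hence moves the slice $E\cap\partial K$ on which the boundary points are constrained), one cannot treat the free and constrained coordinates as decoupled without an actual differential computation. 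The fallback via nested flags fares no better as stated: it would require a ``pinned'' Blaschke--Petkantschin formula for $l$-planes containing the fixed $(k-1)$-plane $\mathrm{aff}(x_{l-k+1},\ldots,x_l)$, with the spanning points of that smaller plane confined to $\partial K$ --- a statement that is neither available in the paper nor formulated, let alone proved, in your proposal.

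For comparison, the paper closes this gap without any new Jacobian computation, by a limiting trick worth remembering: weight the left-hand functional by $\prod_{j=l-k+1}^{l}\|\mathrm{P}_{\mathrm{aff}(x_0,\ldots,x_l)}(n_K(x_j))\|$, replace each boundary integration by the Steiner-formula limit $\varepsilon^{-1}\int_{K^{\varepsilon}\setminus K}$ composed with the metric projection onto $K$, apply the classical formula~\eqref{BPF} to the resulting all-Lebesgue integral, and then take $\varepsilon\to 0$ inside each plane $E$: the segment of the collar $K^{\varepsilon}\setminus K$ cut by $E$ over a boundary point $x_j$ has length $h_E(x_j)$ with $\varepsilon^{-1}h_E(x_j)\to\|\mathrm{P}_E(n_K(x_j))\|^{-1}$, which produces exactly the per-point factors and cancels the weights. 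The ``no cross-terms'' property you worry about is automatic there, because the collar limit is local at each boundary point and the only Jacobian ever used is the known one from~\eqref{BPF}. If you insist on your coarea route, you would have to redo (and extend to mixed configurations) the rectifiability and Jacobian analysis of Z\"ahle's original paper, which is a substantially longer undertaking than the argument you sketched.
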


Consider the case $k=l=1$ when we have one point on the boundary and one point inside $K$. 
Applying Theorem~\ref{thm3} to the function $h(x_0, x_1) = |x_0 - x_1|^n$ for some integer $n$ implies the following formula.
\begin{corollary}\label{2251}
For a convex body $K$ with smooth boundary we have
\[
     \int\limits_{\partial K}\int\limits_{K} |x_0 - x_1|^n \mathrm{d}x_{0} \sigma(\mathrm{d}x_{1}) = \frac{\omega_d}{4(n+d)}\int\limits_{A_{d,1}} |G \cap K|^{n+d}\left(\frac{1}{\sin \alpha_1} + \frac{1}{\sin \alpha_2}\right)\mu_{d,1}(\mathrm{d}G),
\]
where $\alpha_i$ is the angle between line $G$ and tangent hyperplane at point $x_i$.
\end{corollary}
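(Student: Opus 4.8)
The plan is to specialize Theorem~\ref{thm3} to the case $k=l=1$ and apply it to the function $h(x_0,x_1)=|x_0-x_1|^n$. Here the $l$-planes $E$ are lines $G\in A_{d,1}$, the single ``interior'' point is $x_0\in G\cap K$ and the single ``boundary'' point is $x_1\in G\cap\partial K$. With this $h$ the left-hand side of Theorem~\ref{thm3} is exactly $\int_{\partial K}\int_K|x_0-x_1|^n\,\dd x_0\,\sg(\dd x_1)$, the quantity we want. On the right-hand side the Jacobian factor is $|[x_0,x_1]|^{d-l}=|x_0-x_1|^{d-1}$ and the only normal weight is $\|\mathrm{P}_G(n_K(x_1))\|^{-1}$, so after merging it with $h$ the integrand over $G$ becomes
\begin{align*}
|x_0-x_1|^{\,n+d-1}\,\|\mathrm{P}_G(n_K(x_1))\|^{-1}.
\end{align*}

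Next I would carry out the two inner integrations for a fixed line $G$. First, $G\cap\partial K$ consists of the two endpoints $p_1,p_2$ of the chord $G\cap K$, and $\sg_{G\cap\partial K}$ is $0$-dimensional Hausdorff measure, i.e.\ counting measure; hence the $x_1$-integral is simply the sum of the two endpoint contributions. Second, for a fixed endpoint I parametrize the chord by arclength $t\in[0,|G\cap K|]$ measured from that endpoint, so that $|x_0-x_1|=t$ and
\begin{align*}
\int_{G\cap K}|x_0-x_1|^{\,n+d-1}\,\lambda_G(\dd x_0)=\int_0^{|G\cap K|}t^{\,n+d-1}\,\dd t=\frac{|G\cap K|^{\,n+d}}{n+d},
\end{align*}
the same for either endpoint (one just reverses the parametrization). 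The geometric heart is the third step, identifying the weight: since $n_K(x_1)$ is a unit vector and $\mathrm{P}_G$ is orthogonal projection onto the line $G$ with unit direction $u$, one has $\|\mathrm{P}_G(n_K(x_1))\|=|\langle n_K(x_1),u\rangle|$, and because the tangent hyperplane at $x_1$ is the orthogonal complement of $n_K(x_1)$, the angle $\alpha_i$ between $G$ and that hyperplane is the complement of the angle between $G$ and $n_K(x_1)$. Thus $\|\mathrm{P}_G(n_K(x_1))\|=\sin\alpha_i$, producing the weights $1/\sin\alpha_1$ and $1/\sin\alpha_2$ at the two endpoints.

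Finally I would assemble the pieces: the two endpoints together contribute $\frac{|G\cap K|^{n+d}}{n+d}\big(\tfrac{1}{\sin\alpha_1}+\tfrac{1}{\sin\alpha_2}\big)$, and multiplying by the global constant $(l!)^{d-l}b_{d,l}=b_{d,1}=\omega_d/\omega_1$ and integrating over $A_{d,1}$ gives the stated identity. I expect the main difficulty to be the constant bookkeeping rather than anything conceptual: one must pin down the numerical prefactor coming from $b_{d,1}$ together with the factor $1/(n+d)$ from the radial integral, and keep straight that the two boundary points are \emph{summed} (not averaged). The one genuine hypothesis to watch is integrability of $t^{\,n+d-1}$ near $t=0$, which requires $n+d>0$; this is exactly the condition under which both the radial integral and the left-hand side (where $\int_K|x_0-x_1|^n\,\dd x_0$ is integrated against $x_1\in\partial K$) are well defined, and it restricts the admissible integers $n$.
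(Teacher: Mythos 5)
Your route is identical to the paper's own proof: specialize Theorem~\ref{thm3} to $k=l=1$ with $h(x_0,x_1)=|x_0-x_1|^n$, read $\sigma_{G\cap\partial K}$ as counting measure on the two chord endpoints, identify $\|\mathrm{P}_G(n_K(x_i))\|=\sin\alpha_i$, and evaluate $\int_0^{|G\cap K|}t^{n+d-1}\,\dd t=|G\cap K|^{n+d}/(n+d)$. All of those steps are sound. The gap is exactly at the point you yourself flagged as ``constant bookkeeping'' and then never carried out: you stop at $(l!)^{d-l}b_{d,l}=b_{d,1}=\omega_d/\omega_1$ and assert that this ``gives the stated identity.'' It does not, because $\omega_1=|\mathbb{S}^0|=2$, so your (correct) derivation produces the prefactor
\begin{align*}
\frac{b_{d,1}}{n+d}=\frac{\omega_d}{\omega_1(n+d)}=\frac{\omega_d}{2(n+d)},
\end{align*}
whereas the statement has $\omega_d/(4(n+d))$. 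A proof cannot end by silently identifying these two constants, so as written your argument is incomplete.

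In fact the mismatch is not your error but the paper's: the corollary as printed (and the paper's own proof, which claims that applying Theorem~\ref{thm3} yields the factor $\omega_d/4$) is off by a factor of $2$, and the constant you derived is the correct one, since Theorem~\ref{thm3} with $k=l=1$ carries $(1!)^{d-1}b_{d,1}=\omega_d/2$ and the two endpoints are genuinely summed, with no ordered-pair double counting to absorb. You can confirm this with $K=\mathbb{B}^2$, $n=0$: the left-hand side is $|K|\,|\partial K|=2\pi^2$, while for the disk $|G\cap K|=2\sqrt{1-p^2}$ and $\sin\alpha_1=\sin\alpha_2=\sqrt{1-p^2}$, so under the normalization~\eqref{2104} (which makes $\mu_{2,1}=\pi^{-1}\dd p\,\dd\theta$ in the parametrization by $(p,\theta)\in\R\times[0,\pi)$)
\begin{align*}
\int\limits_{A_{2,1}}|G\cap K|^2\left(\frac{1}{\sin\alpha_1}+\frac{1}{\sin\alpha_2}\right)\mu_{2,1}(\dd G)
=\frac{1}{\pi}\int\limits_0^{\pi}\int\limits_{-1}^{1}8\sqrt{1-p^2}\,\dd p\,\dd\theta=4\pi,
\end{align*}
and only the prefactor $\omega_2/(2(n+d))=\pi/2$ recovers $2\pi^2$; the printed $\pi/4$ gives $\pi^2$. (The value $b_{d,1}=\omega_d/2$ is also the one consistent with the Kingman formula quoted right after the corollary.) So finish your proof by evaluating $\omega_1=2$, state the conclusion with the constant $\omega_d/(2(n+d))$, and note explicitly that Corollary~\ref{2251} as printed contains a factor-of-$2$ error.
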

This is a counterpart of Kingman's formula \cite{jK69}, which states that 
\begin{align*}
    \int\limits_{K^2} |x_0 - x_1|^n \mathrm{d}x_{0}\mathrm{d}x_{1} = \frac{\omega_d}{(n+d)(n+d+1)} \int\limits_{A_{d,1}} |G \cap K|^{n+d+1}\mu_{d,1}(\mathrm{d}G). 
\end{align*}
\begin{proof}[Proof of Corollary~\ref{2251}]
Applying Theorem~\ref{thm3} with $h(x_0, x_1) = |x_0 - x_1|^n$ gives
\begin{align*}
    \int\limits_{\partial K}\int\limits_{K} &|x_0 - x_1|^n \mathrm{d}x_{0} \sigma(\mathrm{d}x_{1})    \\
    &=
    \frac{\omega_{d}}{4} \int\limits_{A_{d,1}}  \left(\frac{1}{\sin \alpha_1}\int\limits_{G\cap K}|x_0 - x_1|^{n+d-1} \lambda_G(\mathrm{d}x_0) + \frac{1}{\sin \alpha_2}\int\limits_{G\cap K}|x_0 - x_2|^{n+d-1} \lambda_G(\mathrm{d}x_0) \right)\mu_{d,1}(\mathrm{d}G).
\end{align*}
Noting that
\[
    \int\limits_{G \cap K} |x_0 - x_i|^{n+d-1} \lambda_G(\mathrm{d}x_0) = \int\limits_0^{|G \cap K|} x^{n+d-1} \mathrm{d}x = \frac{|G \cap K|^{n+d}}{n+d}
\]
finishes the proof.
\end{proof}


Applying~\eqref{Zahle} to the polytopes gives the following result.
\begin{thm}\label{thm4}
Let $P$ be a convex polytope in $\mathbb{R}^d$ and let $h(x_0, \ldots, x_l)$ be a measurable function, $l \leqslant d-1$. Then 

\begin{align*}
    &\int\limits_{(\partial P)^{l+1}} h(x_0,\ldots, x_l) \sigma(\mathrm{d}x_0) \ldots \sigma(\mathrm{d}x_l)  \\
    &=(l!)^{d-l}b_{d,l} 
     \sum_{\substack{(F_0, \ldots, F_l)\in\mathcal F^{l+1}(P)\\ \exists F_i \ne F_j}}
     \int\limits_{A_{d,l}}\int\limits_{E\cap F_0}\ldots
     \int\limits_{E\cap F_l} h(x_0, \ldots, x_l)  \\
     &\times
     |[x_0, \ldots, x_l]|^{d-l} \prod_{j=0}^{l}\|\mathrm P_E(n_P(x_j))\|^{-1} \lambda_{E \cap F_0 }(\mathrm{d}x_0)\ldots \lambda_{E \cap F_l}
     (\mathrm{d}x_l)\mu_{d,l}(\mathrm{d}E)  \\
     &+ (l!)^{d-l-1}b_{d-1,l}\cdot \sum_{F\in\mathcal F(P)} 
     \int\limits_{A_{F,l}}\int\limits_{(E \cap F)^{l+1}} h(x_0, \ldots, x_l)  \\
     &\times 
     |[x_0, \ldots, x_l]|^{d-l-1}\lambda_{E\cap F}(\mathrm{d}x_0)\ldots \lambda_{E \cap F}(\mathrm{d}x_l)\mu_{F,l}(\mathrm{d}E),  
\end{align*}
where $n_P(x)$ denotes the outer unit normal vector of $P$ at $x$ and $\mathrm P_E$ is the orthogonal projection onto the hyperplane $E$.
\end{thm}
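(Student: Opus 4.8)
The plan is to split the boundary $\partial P$ into its facets and to sort the point configurations by which facets they occupy, treating separately the tuples meeting at least two distinct facets and those confined to a single facet. Since $\partial P=\bigcup_{F\in\mathcal F(P)}F$ up to a $\sigma$-null set (the union of the lower-dimensional faces), the $(l+1)$-fold integral expands as
\[
\int_{(\partial P)^{l+1}} h\,\sigma(\dd x_0)\cdots\sigma(\dd x_l)
=\sum_{(F_0,\ldots,F_l)\in\mathcal F^{l+1}(P)}\ \int_{F_0}\!\!\cdots\!\int_{F_l} h\,\lambda_{F_0}(\dd x_0)\cdots\lambda_{F_l}(\dd x_l),
\]
and I would break this sum into the part with $\exists\,F_i\ne F_j$ and the part with $F_0=\cdots=F_l=F$.

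For the mixed part I would apply the Z\"ahle formula \eqref{Zahle}. The key point is that when the $x_j$ lie on at least two different facets they are, for almost every configuration, in general position in $\R^d$; hence $\mathrm{aff}(x_0,\ldots,x_l)$ is an $l$-plane $E$ crossing each facet transversally, so that $\mathrm P_{\mathrm{aff}}=\mathrm P_E$, each $\|\mathrm P_E(n_P(x_j))\|$ is positive, and the restriction of $\sigma_{E\cap\partial P}$ to $E\cap F_j$ is the $(l-1)$-dimensional Lebesgue measure $\lambda_{E\cap F_j}$. Feeding the test function $h\,|[x_0,\ldots,x_l]|^{d-l}\prod_{j}\|\mathrm P_E(n_P(x_j))\|^{-1}$ into \eqref{Zahle} and solving for the plain boundary integral then reproduces exactly the first sum of the statement. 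Single-facet configurations do not intrude here: on a facet $F$ every outer normal equals $n_F$, which is orthogonal to $\mathrm{aff}(F)$ and hence to $\mathrm{aff}(x_0,\ldots,x_l)\subseteq\mathrm{aff}(F)$, so the factor $\prod_j\|\mathrm P_{\mathrm{aff}}(n_P(x_j))\|$ vanishes; equivalently, on a generic $l$-plane such points are trapped in the $(l-1)$-dimensional slice $E\cap F$ and fail to be in general position.

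For the single-facet part I would pass to the flat geometry of $\mathrm{aff}(F)\cong\R^{d-1}$, in which $F$ is a $(d-1)$-dimensional convex body, and invoke the ordinary Blaschke--Petkantschin formula \eqref{BPF} in dimension $d-1$ with $l$-planes. It rewrites $\int_{F^{l+1}} h\,\lambda_F(\dd x_0)\cdots\lambda_F(\dd x_l)$ as
\begin{multline*}
(l!)^{d-l-1}b_{d-1,l}\int_{A_{F,l}}\int_{(E\cap F)^{l+1}} h\,|[x_0,\ldots,x_l]|^{d-l-1}\\
\times\lambda_{E\cap F}(\dd x_0)\cdots\lambda_{E\cap F}(\dd x_l)\,\mu_{F,l}(\dd E),
\end{multline*}
and summing over $F\in\mathcal F(P)$ yields precisely the second term, with constant $(l!)^{(d-1)-l}b_{d-1,l}$ and exponent $(d-1)-l=d-l-1$. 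The hypothesis $l\le d-1$ is exactly what makes this exponent nonnegative and what permits $l+1$ points on a $(d-1)$-dimensional facet to be in general position.

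The main obstacle will be the measure-theoretic bookkeeping behind the mixed part: one must check that the non-general-position tuples on the boundary side, together with the degenerate $l$-planes lying inside some facet hyperplane on the slicing side, form negligible sets, so that passing between $\int_{(\partial P)^{l+1}}$ and the sum over transversal facet-tuples introduces no spurious lower-dimensional contribution. Once these degeneracies are discarded and the identification of $\sigma_{E\cap\partial P}$ with $\lambda_{E\cap F_j}$ on each transversal slice is justified, the mixed and single-facet parts assemble into the two terms of the statement.
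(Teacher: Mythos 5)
Your proposal is correct and follows essentially the same route as the paper: split the boundary configurations into those meeting at least two distinct facets and those confined to a single facet, apply the Z\"ahle formula \eqref{Zahle} to the mixed part (using that the normal-projection weight vanishes exactly on single-facet tuples, which on the slicing side fail to be in general position), and apply the Blaschke--Petkantschin formula \eqref{BPF} in dimension $d-1$ within each facet's affine hull to the single-facet part. The only difference is presentational: you expand into a sum over facet tuples before invoking the formulas, while the paper carries an indicator function and expands afterwards.
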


The paper is organized as follows. In Sections \ref{prf} and \ref{APprf} we present the proof of Theorem~\ref{thm1} and Theorem~\ref{thm2}, then present the proof of Theorem~\ref{thm3} in Section \ref{NewZahle}. The proof of Theorem~\ref{thm4} is presented in Section \ref{Zahlepoly}.

\section{Proof of Theorem~\ref{thm1}}\label{prf}

To prove Theorem~\ref{thm1} we need the notion of flag spaces.

Let $p, q \in \{0, \ldots, d\}$, and let $E \in A_{d, p}$ be a fixed $p$-dimensional affine subspace. 
As mentioned above,  $A_{E, q}$ is the space of all $q$-dimensional affine subspaces
contained in $E$ if $q \leqslant p$. 
If $q \geqslant p$ we denote by $A_{E, q}$ the space of all $q$-dimensional affine subspaces containing $E$. Denote by $\mu_{E,q}$ the invariant measure on $A_{E, q}$  (see \cite{SW08}, Section 7.1). 

Consider pairs of affine subspaces:
\begin{align*}
    &A(d, p, q) := \{(E,F) \in A_{d, p} \times A_{d, q} : E \subset F\},\; \text{if}\; p < q, \\
&A(d, p, q) := \{(E,F) \in A_{d, p} \times A_{d, q} : E \supset F\},\; \text{if}\; p > q.
\end{align*}

We are going to apply the following Fubini-type theorem for flag spaces (see, e.g.,~\cite{SW08}, Theorem 7.1.2):
if $0 \leqslant p < q \leqslant d-1$ and $h : A(d, p, q) \rightarrow \mathbb{R}$ is a nonnegative measurable function, then

\begin{align}\label{eq:flags}
    &\int\limits_{A_{d,q}}\int\limits_{A_{F,p}} h(E,F)\mu_{F,p}(\mathrm{d}E)\mu_{d,q}(\mathrm{d}F) = 
     \int\limits_{A_{d,p}}\int\limits_{A_{E,q}} h(E,F)\mu_{E,q}(\mathrm{d}F)\mu_{d,p}(\mathrm{d}E).
\end{align}

\bigskip

Now let us prove Theorem~\ref{thm1}. It follows from~\eqref{Zahle} with $l=1$ that for a non-negative  measurable function $h(x_1,x_2)$
we have
\begin{align}\label{eq:zahle2p}
    \int\limits_{A_{d,1}} h(x_1,x_2) \mu_{d,1}(\mathrm{d}G) = \frac{1}{\omega_d}\int\limits_{(\partial K)^2} h(x_1,x_2)\frac{\sin\alpha_1\sin\alpha_2}{|x_1 - x_2|^{d-1}} \sigma(\mathrm{d}x_1)\sigma(\mathrm{d}x_2),
\end{align}
where $\alpha_1$ and $\alpha_2$ are the angles between line $\mathrm{aff}(x_1,x_2)$ and tangent hyperplanes at points $x_1$ and $x_2$.

Using equation \eqref{eq:flags} for $p = 1, q = 2$ we get
\begin{align*}
    \int\limits_{A_{d,1}} h(|G \cap K|)\mu_{d,1}(\mathrm{d}G) &= \int\limits_{A_{d,1}}\int\limits_{A_{G,2}} h(|G \cap K|)\mu_{G,2}(\mathrm{d}E)\mu_{d,1}(\mathrm{d}G)  \\ \notag 
    &= \int\limits_{A_{d,2}}\int\limits_{A_{E,1}} h(|G \cap K|)\mu_{E,1}(\mathrm{d}G)\mu_{d,2}(\mathrm{d}E).
\end{align*}
Applying  Pleijel identity \eqref{eq:pleijelcot} to the inner integral and planar convex body $K \cap E$ we have
\begin{align*}
    \int\limits_{A_{E,1}} h(|G \cap K|)\mu_{E,1}(\mathrm{d}G) = \int\limits_{A_{E,1}} h'(|G \cap K|)|G \cap K|\cot\psi_1\cot\psi_2 \mu_{E,1}(\mathrm{d}G),
\end{align*}
where angles $\psi_1$ and $\psi_2$ are the angles between tangents to $K \cap F$ at the endpoints of $G \cap K$ and line $G$ lying on the same side of $G$. 

Applying equation \eqref{eq:flags} once again yields
\begin{align}\label{eq:main}
    &\int\limits_{A_{d,1}} h(|G \cap K|)\mu_{d,1}(\mathrm{d}G)  \\ \notag
    &= \int\limits_{A_{d,1}}\int\limits_{A_{G,2}} h'(|G \cap K|)|G \cap K|\cot\psi_1\cot\psi_2 \mu_{G,2}(\mathrm{d}E)\mu_{d,1}(\mathrm{d}G)  \\ \notag
    &= \int\limits_{A_{d,1}} h'(|G \cap K|)|G \cap K| \left(\int\limits_{A_{G,2}} \cot\psi_1\cot\psi_2 \mu_{G,2}(\mathrm{d}E)\right)\mu_{d,1}(\mathrm{d}G)  \\ \notag 
    &= \frac{1}{\omega_d}\int\limits_{(\partial K)^2} 
    \frac{h'(|x_1 - x_2|)}{|x_1 - x_2|^{d-2}}\sin\alpha_1\sin\alpha_2\left(\int\limits_{A_{G,2}} \cot\psi_1\cot\psi_2 \mu_{G,2}(\mathrm{d}E)\right)\sigma({\mathrm{d}x_1)}\sigma({\mathrm{d}x_2)},
\end{align}
where in the last equality we used \eqref{eq:zahle2p}.

Consider the inner integral in the last expression. Note that space $A_{G,2}$ is parameterised by lines passing through the origin in the orthogonal complement of $G$. 

Denote by $u_E$ the unit vector in the orthogonal complement of $\mathrm{aff}(x_1,x_2)$ corresponding to plane $E$ (i.e. $E$ passes through $u_F$). By $t_1(E)$ and $t_2(E)$ denote unit vectors collinear with tangents  to $E \cap K$ at $x_1$ and $x_2$ and lying in the same halfplane of $E$ as $u_E$. 
Denote by $u_1$ the normalised projection of $n_{x_1}$ onto the orthogonal complement of $\mathrm{aff}(x_1,x_2)$ and by $t_1$ --- normalised projection of vector $x_2 - x_1$ onto the tangent hyperplane at $x_1$. Vectors $u_2$ and $t_2$ are defined similarly.

\begin{lemma}\label{cotlemma}
 \begin{align*}
     \cot \psi_1 = (u_1,u_E)\cdot \cot\alpha_1 \;\text{and} \; \cot \psi_2 = (u_2,u_E)\cdot \cot\alpha_2
 \end{align*}

\end{lemma}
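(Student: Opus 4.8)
The plan is to prove the two displayed identities separately; since they are interchanged by swapping the roles of $x_1$ and $x_2$, it suffices to establish the first one, $\cot\psi_1 = (u_1, u_E)\cot\alpha_1$. First I would translate so that $x_1$ is the origin and introduce the unit vectors $e := (x_2 - x_1)/|x_2 - x_1|$ (the direction of the chord $G$) and $n := n_K(x_1)$ (the outer unit normal at $x_1$), so that the tangent hyperplane at $x_1$ is $T_1 = n^{\perp}$. The decisive geometric observation is that the tangent line to the planar convex body $E \cap K$ at $x_1$ is exactly $E \cap T_1$ --- the tangent space of a section equals the section of the tangent space --- and that this intersection is one-dimensional precisely when $E \not\subset T_1$, i.e. when $(u_E, n) \neq 0$. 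Hence $t_1(E)$ is a unit vector spanning $\mathrm{span}(e, u_E) \cap n^{\perp}$, and I can write it explicitly as proportional to $w := (e, n)\, u_E - (u_E, n)\, e$, since one checks directly that $w \in E$ and $(w, n) = 0$.

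Everything then reduces to a short computation in the two scalars $(e, n)$ and $(u_E, n)$, using that $(e, u_E) = 0$ because $u_E$ lies in the orthogonal complement of $G$. I would compute $|w|^2 = (e,n)^2 + (u_E, n)^2$ and $(e, w) = -(u_E, n)$, which yield $|\cos\psi_1| = |(u_E, n)|/|w|$ and $\sin\psi_1 = |(e, n)|/|w|$, so that
\begin{align*}
\cot\psi_1 = \pm\frac{(u_E, n)}{|(e, n)|}.
\end{align*}
On the other side, projecting $n$ onto $e^{\perp}$ gives $u_1 = (n - (n, e)\,e)/\sqrt{1 - (e,n)^2}$, whence $(u_1, u_E) = (u_E, n)/\sqrt{1 - (e,n)^2}$; and since $\sin\alpha_1 = |(e, n)|$ and $\cos\alpha_1 = \sqrt{1 - (e,n)^2}$ (the angle between $G$ and the hyperplane $T_1$), we get $\cot\alpha_1 = \sqrt{1 - (e,n)^2}\,/\,|(e, n)|$. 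Multiplying, the factors $\sqrt{1 - (e,n)^2}$ cancel and the product equals $(u_E, n)/|(e, n)|$, matching $\cot\psi_1$ up to the sign that remains to be fixed.

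The step I expect to be the main obstacle is precisely this sign bookkeeping, since $\cot\psi_1$ and $(u_1, u_E)$ are both genuinely signed. I would resolve it using the two orientation conventions in play. Because $x_2 \in K$ and $K$ is convex, the chord direction satisfies $(e, n) \leq 0$, so $|(e, n)| = -(e, n)$ and $e$ points to the inner side of $T_1$; this fixes the (positive) sign of $\cot\alpha_1$. The convention that $t_1(E)$ lies in the same halfplane of $E$ as $u_E$ then determines the sign of $t_1(E)$: the $u_E$-component of $w$ equals $(e, n) < 0$, so $t_1(E) = -w/|w|$, and recomputing $\cos\psi_1 = (e, t_1(E)) = (u_E, n)/|w|$ gives $\cot\psi_1 = (u_E, n)/|(e, n)|$ with the correct sign, in agreement with the signed product above. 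As a consistency check I would note that replacing $u_E$ by $-u_E$ (which names the same plane $E$) flips the sign of both $\cot\psi_1$ and $(u_1, u_E)$, so that the product $\cot\psi_1 \cot\psi_2$ entering \eqref{eq:main} is a well-defined function on $A_{G,2}$; and in the degenerate case $(u_E, n) = 0$ both sides vanish, corresponding to $\psi_1 = \pi/2$.
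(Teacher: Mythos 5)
Your proof is correct, and strategically it belongs to the same family as the paper's: identify the tangent line of the section $E\cap K$ at $x_1$ with $E\cap T_1$, read off $\cos\psi_1=(e,t_1(E))$ and $\sin\psi_1=(u_E,t_1(E))$, and compare with $\alpha_1$. The execution, however, is genuinely different. You solve for the section tangent explicitly, $t_1(E)\propto (e,n)\,u_E-(u_E,n)\,e$, reduce both sides to the two scalars $(e,n)$ and $(u_E,n)$, and do honest sign bookkeeping from $(e,n)\le 0$ and the halfplane convention. The paper never writes $t_1(E)$ explicitly; instead it decomposes $t_1(E)$ in the basis $\{e,u_E\}$ of $E$ and exploits that $(e,t)$ is proportional to $(u_1,t)$ for every $t$ in the tangent hyperplane at $x_1$, the proportionality constant cancelling in the final ratio. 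What your route buys is rigor: the paper's own intermediate step is false as written — it asserts $n_{x_1}=u_1+\frac{x_1-x_2}{|x_1-x_2|}$, impossible for unit vectors (it would force $|n_{x_1}|=\sqrt{2}$); the correct relation is $n_{x_1}=\cos\alpha_1\, u_1+\sin\alpha_1\,\frac{x_1-x_2}{|x_1-x_2|}$, so the paper's displayed equalities $(x_2-x_1,t_1)=|x_2-x_1|(u_1,t_1)$ and $(x_2-x_1,t_1(E))=|x_2-x_1|(u_1,t_1(E))$ each miss a factor of $\cot\alpha_1$. That factor appears in both numerators of the ratio identity being proved, so it cancels and the lemma survives, but your computation avoids the defect entirely, and it also makes the sign conventions explicit where the paper leaves them implicit. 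One small slip on your side: $E\not\subset T_1$ is equivalent to ``$(e,n)\ne 0$ or $(u_E,n)\ne 0$'', not to $(u_E,n)\ne 0$ alone; this is harmless here because $(e,n)<0$ for a chord meeting the interior, which already guarantees that $E\cap T_1$ is a line.
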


\begin{proof}[Proof]
By definition,
\[
\cos\psi_1 = \frac{(x_2 - x_1, t_1(E)))}{|x_2 - x_1|} \;\text{and} \;\sin\psi_1 = (u_E, t_1(E))
\]

\[
\cos\alpha_1 = \frac{(x_2 - x_1, t_1)}{|x_2 - x_1|} \;\text{and} \;\sin\alpha_1 = (u_1, t_1).
\]

Hence we need to prove that 

\[
    \frac{(x_2 - x_1, t_1(E)))}{(u_E, t_1(E))} = (u_1, u_E)\cdot \frac{(x_2 - x_1, t_1)}{(u_1, t_1)}.
\]

Note that from definition of $u_1$ it follows that 
\[n_{x_1} = u_1 + \frac{x_1 - x_2}{|x_1 - x_2|}\].

Thus, 
\[  
    (x_2 - x_1, t_1) = |x_2 - x_1|\cdot(u_1 - n_{x_1}, t_1) = |x_2 - x_1|\cdot(u_1,t_1)
\]

and 

\[
    (x_2 - x_1, t_1(E)) = |x_2 - x_1|\cdot(u_1 - n_{x_1}, t_1(E)) = |x_2 - x_1|\cdot(u_1,t_1(E)).
\]

Moreover, 

\[
    t_1(E) = (t_1(E), \frac{x_2 - x_1}{|x_2 - x_1|})\cdot\frac{x_2 - x_1}{|x_2 - x_1|} + (t_1(E), u_E)\cdot u_E,
\]

hence 

\[
    (u_1, t_1(E)) = (t_1(E), u_E)\cdot (u_1,u_E) \; \text{and} \; (u_E, t_1(E)) = (t_1(E), u_E).
\]

Thus, 

\begin{align*}
    \frac{(x_2 - x_1, t_1(E)))}{(u_E, t_1(E))} &= \frac{|x_2 - x_1|\cdot(u_1,t_1(E))}{(t_1(E), u_E)} = \frac{|x_2 - x_1|(t_1(E), u_E)\cdot (u_1,u_E)}{(t_1(E), u_E)}  \\ 
    &= |x_2 - x_1|\cdot (u_1,u_E) = (u_1, u_E)\cdot \frac{(x_2 - x_1, t_1)}{(u_1, t_1)}.
\end{align*}
Proof of the second part of lemma is the same.

\end{proof}

Lemma \ref{cotlemma} yields  
\begin{align}\label{cots}
    &\int\limits_{A_{G,2}} \cot\psi_1\cot\psi_2 \mu_{G,2}(\mathrm{d}E) = \int\limits_{A_{G,2}} \cot\alpha_1\cot\alpha_2\cdot(u_1,u_E)\cdot(u_2,u_E) \mu_{G,2}(\mathrm{d}E)  \\ \notag 
    &= \cot\alpha_1\cot\alpha_2\cdot\int\limits_{\mathbb{S}^{d-2}}(u_1,u_E)\cdot(u_2,u_E)\tilde{\sigma}(\dd u_E),
\end{align}
where  $\tilde{\sigma}$ is  the uniform measure on $\mathbb{S}^{d-2}$ normalised to be probabilistic.

Substituting \eqref{cots} in \eqref{eq:main} yields
\begin{align*}
    &\int\limits_{A_{d,1}} h(|G \cap K|)\mu_{d,1}(\mathrm{d}G)  \\ \notag  &=\frac{1}{\omega_d}\int\limits_{(\partial K)^2} 
    \frac{h'(|x_1 - x_2|)}{|x_1 - x_2|^{d-2}}\cos\alpha_1\cos\alpha_2\left(\int\limits_{\mathbb{S}^{d-2}}(u_1,u_E)\cdot(u_2,u_E)\tilde{\sigma}(\dd u_E)\right)\sigma({\mathrm{d}x_1)}\sigma({\mathrm{d}x_2)},
\end{align*}

To finish the proof of the main result it suffices to prove that 

\[\int\limits_{\mathbb{S}^{d-2}}(u_1,u_E)\cdot(u_2,u_E)\tilde{\sigma}(\dd u_E) = \frac{1}{d-1}(u_1,u_2),\]

since $(u_1,u_2) = \cos(\phi_0)$. 

The latter statement follows from the fact that if point $u_F$ is uniformly distributed on the unit sphere $\mathbb{S}^{n-1}$ ($n=d-1$) than vector formed by its first $n-2$ coordinates is uniformly distributed on the unit ball $\mathbb{B}^{n-2}$.

Change the coordinate system so that first $n-2$ coordinates of $u_1$ and $u_2$ are zero. 
Then 

\begin{align}\label{prodint}
    &\int\limits_{\mathbb{S}^{n-1}}(u_1,u_E)\cdot(u_2,u_E)\tilde{\sigma}(\dd u_E) = 
    \int\limits_{\mathbb{B}^{n-2}}\int\limits_{\sqrt{1-|z|^2}\mathbb{S}^1} (x+z,u_1)\cdot(x+z,u_2)\tilde{\sigma}(\dd x)\tilde{\lambda}(\dd z)  \\\notag
    &= \int\limits_{\mathbb{B}^{n-2}}\int\limits_{\sqrt{1-|z|^2}\mathbb{S}^1} (x,u_1)\cdot(x,u_2)\tilde{\sigma}(\dd x)\tilde{\lambda}(\dd z) = 
    \int\limits_{\mathbb{B}^{n-2}}\int\limits_{\mathbb{S}^1}(1-|z|^2) (x,u_1)\cdot(x,u_2)\tilde{\sigma}(\dd x)\tilde{\lambda}(\dd z)  \\\notag
    &= \int\limits_{\mathbb{S}^1} (x,u_1)\cdot(x,u_2)\tilde{\sigma}(\dd x)\cdot \int\limits_{\mathbb{B}^{n-2}}(1-|z|^2)\tilde{\lambda}(\dd z) = 
    \frac{1}{2\pi}\int\limits_{0}^{2\pi}\cos(x)\cos(x-\phi_0) \dd x 
     \int\limits_{\mathbb{B}^{n-2}}(1-|z|^2)\tilde{\lambda}(\dd z).  
\end{align}

Note that

\begin{align}\label{cosprod}
    \int\limits_{0}^{2\pi}\cos(x)\cos(x-\phi_0) \dd x = \int\limits_{0}^{2\pi}\frac{1}{2}(\cos(\phi_0)+\cos(2x-\phi_0)) \dd x = \pi\cos(\phi_0) 
\end{align}

and 

\begin{align}\label{ballint}
    \int\limits_{\mathbb{B}^{n-2}}(1-|z|^2)\tilde{\lambda}(\dd z)& = \frac{1}{\kappa_{n-2}}\int\limits_{\mathbb{B}^{n-2}}(1-|z|^2)\lambda(\dd z) = \frac{1}{\kappa_{n-2}}\int\limits_{\mathbb{S}^{n-3}}\int\limits_0^1 (1-r^2)r^{n-3} \dd r\sigma(\dd \phi)  \\ \notag
    &=(n-2)\left(\frac{1}{n-2} - \frac{1}{n}\right) = \frac{2}{n} = \frac{2}{d-1}. 
\end{align}

Substituting \eqref{cosprod} and \eqref{ballint} in \eqref{prodint} 
 finishes the proof.
\section{Proof of Theorem~\ref{thm2}}\label{APprf}

Assume that $P$ is a convex polytope in $\mathbb{R}^d$. 
Similarly as in the proof of Theorem~\ref{thm1} we get 

\begin{align}\label{AP}
    \int\limits_{A_{d,1}} h(|G \cap P|)\mu_{d,1}(\mathrm{d}G) = \int\limits_{A_{d,2}}\int\limits_{A_{E,1}} h(|G \cap P|)\mu_{E,1}(\mathrm{d}G)\mu_{d,2}(\mathrm{d}E). 
\end{align}

Applying the Ambartzumian-Pleijel identity to the planar polygon $P \cap E$ yields
\begin{align}\label{AP1}
    \int\limits_{A_{E,1}} h(|G \cap P|)\mu_{E,1}(\mathrm{d}G) = \int\limits_{A_{E,1}} h'(|G \cap P|)|G \cap P|\cot\psi_1\cot\psi_2 \mu_{E,1}(\mathrm{d}G) +  \sum_{i = 1}^{N} \int\limits_{0}^{a_i} h(t) \mathrm{d}t,
\end{align}
where  $\psi_1$ and $\psi_2$ are the angles between tangents to $P \cap E$ at the endpoints of $G \cap P$ and line $G$ lying on the same side of $G$ and $a_i$ are the lengths of the sides of $E \cap P$. 

The second term in the right-hand side of \eqref{AP1} can be rewritten  the following way:
\begin{align}\label{AP2}
    \sum_{i = 1}^{N} \int\limits_{0}^{a_i} h(t) \mathrm{d}t  = \sum_{F\in\mathcal F(P)} \int\limits_{0}^{|E \cap F|}  h(t) \mathrm{d}t. 
\end{align}

Substituting \eqref{AP1} and \eqref{AP2} in \eqref{AP} and applying equation \eqref{eq:flags} implies 

\begin{align}\label{AP3}
     \int\limits_{A_{d,1}} h(|G \cap P|)\mu_{d,1}(\mathrm{d}G) &= \int\limits_{A_{d,1}} h'(|G \cap P|)|G \cap P| \left(\int\limits_{A_{G,2}} \cot\psi_1\cot\psi_2 \mu_{G,2}(\mathrm{d}E)\right)\mu_{d,1}(\mathrm{d}G)  \\ \notag
     &+ \sum_{F\in\mathcal F(P)} \int\limits_{A_{d,2}} \int\limits_{0}^{|E\cap F|}  h(t) \mathrm{d}t \mu_{d,2}(\mathrm{d}E). 
\end{align}

To calculate $\int\limits_{A_{G,2}} \cot\psi_1\cot\psi_2 \mu_{G,2}(\mathrm{d}E)$ we can apply Lemma~\ref{cotlemma}, which is true for $\mu_{d,2}$-almost every line $G$ intersecting $P$ (more precisely for those lines $G$ which
meet $\partial P$ in the relative interior of two different facets). 

Hence, for $\mu_{d,2}$-almost every line $G$ we have: 

\begin{align}\label{APcots}
    \int\limits_{A_{G,2}} \cot\psi_1\cot\psi_2 \mu_{G,2}(\mathrm{d}E) &= \cot\alpha_1\cot\alpha_2\cdot\int\limits_{\mathbb{S}^{d-2}}(u_1,u_E)\cdot(u_2,u_E)\tilde{\sigma}(\dd u_E) \\\notag &= \frac{1}{d-1}(u_1,u_2) \cot\alpha_1\cot\alpha_2.
\end{align}

Consider the second term of \eqref{AP3}. 
Note that for a fixed facet $F$,
\begin{align}\label{APfacets}
    \int\limits_{A_{d,2}} \int\limits_{0}^{|E\cap F|}  h(t) \mathrm{d}t \mu_{d,2}(\mathrm{d}E) = \int\limits_{A_{F,1}}\int\limits_{0}^{|G \cap F|}  h(t) \mathrm{d}t \mu_{F,1}(\mathrm{d}G) = \int\limits_{A_{F,1}} H(|G \cap F|) \mu_{F,1}(\mathrm{d}G).
\end{align}

Substituting \eqref{APcots} and \eqref{APfacets} in \eqref{AP3} yields
 \begin{align*}
     &\int\limits_{A_{d,1}} h(|G \cap P|)\mu_{d,1}(\mathrm{d}G) = \frac{1}{d-1}\int\limits_{A_{d,1}} h'(|G \cap P|)|G \cap P| \cot\alpha_1\cot\alpha_2\cos(\phi_0)\mu_{d,1}(\mathrm{d}G)  \\ &+\sum_{F\in\mathcal F(P)} \int\limits_{A_{F,1}} H(|G \cap F|) \mu_{F,1}(\mathrm{d}G), 
 \end{align*}
which finishes the proof. 
\section{Proof of Theorem~\ref{thm3}}\label{NewZahle}

The proof of Theorem~\ref{thm3} is similar to one in \cite{Reitzner}, but since it covers the case when some of the points are inside the body, we present it here. 

Consider the functional
\[
    \mathcal{I}(h) = \int\limits_{(\partial K)^{k}}\int\limits_{(K)^{l-k+1}}{h(x_0, \ldots , x_l) \prod_{j=l-k+1}^{l}\|\mathrm{P}_{\mathrm{aff}(x_0, \ldots, x_l)}(n_K(x_j))\| \mathrm{d}x_{0} \ldots \mathrm{d}x_{l-k} \sigma(\mathrm{d}x_{l-k+1}) \ldots \sigma(\mathrm{d}x_l)}.
\]

Applying Steiner's formula yields
\begin{align*}
    \mathcal{I}(h) = \lim\limits_{\varepsilon \rightarrow 0} \left(\frac{1}{\varepsilon}\right)^{k} \int\limits_{(K^{\varepsilon} \setminus K)^{k}}\int\limits_{(K)^{l-k+1}} 
    h(x_0, \ldots , x_{l-k}, \overline{x_{l-k+1}}, \ldots, \overline{x_{l}}) \prod_{j=l-k+1}^{l}\|\mathrm{P}_{\mathrm{aff}(x_0, \ldots, x_l)}(n_K(\overline{x_j}))\| \mathrm{d}x_{0} \ldots \mathrm{d}x_{l},
\end{align*}
where $K^{\varepsilon}$ is the $\varepsilon$-neighbourhood of $K$ and for a point $x \notin K$ we denote by $\overline{x}$ the point in $K$, nearest to $x$ (the metric projection). 

The Blaschke--Petkantschin formula (see~\eqref{BPF}) implies that
\begin{align*}
    \mathcal{I}(h) &= (l!)^{d-l} b_{d,l}\lim\limits_{\varepsilon \rightarrow 0} \left(\frac{1}{\varepsilon}\right)^{k} 
     \int\limits_{A_{d,l}}\int\limits_{(E \cap (K^{\varepsilon} \setminus K))^{k}}\int\limits_{(E \cap K)^{l-k+1}} 
    h(x_0, \ldots , x_{l-k}, \overline{x_{l-k+1}}, \ldots, \overline{x_{l}})  \\
    &\times \prod_{j=l-k+1}^{l}\|\mathrm{P}_{\mathrm{aff}(x_0, \ldots, x_l)}(n_K(\overline{x_j}))\|\, |[x_0, \ldots, x_l]|^{d-l} \mathrm{d}x_{0} \ldots \mathrm{d}x_{l}\mu_{d,l}(\mathrm{d}E). 
\end{align*}

Point $x$ in $E \cap (K^{\varepsilon} \setminus K)$ is determined by the point in $E \cap K$ nearest to $x$ and the distance $t(x)$ from $x$ to $E \cap \partial K$. Then $0 \leqslant t(x) \leqslant h_{E}(x)$, where $h_{E}(x)$ is the length of intersection of $K^{\varepsilon}\setminus K$ with the line in $E$ passing through $x$ and orthogonal to $E \cap \partial K$. Generaliszation of Steiner's formula applied to $E \cap (K^{\varepsilon} \setminus K)$ and continuity of integrand implies
\begin{align*}
    \mathcal{I}(h) &= (l!)^{d-l} b_{d,l}\lim\limits_{\varepsilon \rightarrow 0} \int\limits_{A_{d,l}}\int\limits_{(E \cap \partial K)^{k}}\int\limits_{(E \cap K)^{l-k+1}} h(x_0, \ldots , x_{l}) |[x_0, \ldots, x_l]|^{d-l}  \\
    &\times \prod_{j=l-k+1}^{l} \left(\frac{1}{\varepsilon}\|\mathrm{P}_{\mathrm{aff}(x_0, \ldots, x_l)}(n_K(x_j))\| \int\limits_{0}^{h_E(x_j)} \dd t\right) \lambda_{E}(\mathrm{d}x_0)\ldots \lambda_E(\mathrm{d}x_{l-k}) \\
    &\times \sigma_{E\cap \partial K}(\mathrm{d}x_{l-k+1})\ldots \sigma_{E\cap \partial K}(\mathrm{d}x_l)\mu_{d,l}(\mathrm{d}E). 
\end{align*}

Applying 
\[
    h_E(x) \leqslant \varepsilon \|\mathrm{P}_{\mathrm{aff}(x_0, \ldots, x_l)}(n_K(x))\|^{-1}
\]

and 
\[
    \lim\limits_{\varepsilon \rightarrow 0} \frac{1}{\varepsilon}h_E(x) =  \|\mathrm{P}_{\mathrm{aff}(x_0, \ldots, x_l)}(n_K(x))\|^{-1}
\]

yields 

\begin{align*}
    \mathcal{I}(h) &= (l!)^{d-l} b_{d,l}  \int\limits_{A_{d,l}}\int\limits_{(E\cap \partial K)^{k}}
    \int\limits_{(E \cap K)^{l-k+1}}h(x_0, \ldots , x_l)
    {|[x_0, \ldots, x_l]|}^{d-l} \\
    &\times
     \lambda_{E}(\mathrm{d}x_0)\ldots \lambda_E(\mathrm{d}x_{l-k}) \sigma_{E\cap \partial K}(\mathrm{d}x_{l-k+1})\ldots 
     \sigma_{E\cap \partial K}(\mathrm{d}x_l)\mu_{d,l}(\mathrm{d}E), 
\end{align*}

which finishes the proof.

\section{Proof of Theorem~\ref{thm4}}\label{Zahlepoly}

In \eqref{Zahle} we have
$\|\mathrm{P}_{\mathrm{aff}(x_0, \ldots, x_l)}(n_P(x_k))\| = 0$  if and only if all points $x_k$ lie on the same facet of $P$. 
Therefore for measurable function $h$ we have 

\begin{align*}
    &\int\limits_{(\partial P)^{l+1}} h(x_0,\ldots, x_l) \sigma(\mathrm{d}x_0) \ldots \sigma(\mathrm{d}x_l)  \\
    &= \int\limits_{(\partial P)^{l+1}} h(x_0, \ldots, x_l)\mathbbm{1}_{\{x_1, \ldots, x_l \,\text{are not all in the same facet}\}} \sigma(\mathrm{d}x_0) \ldots \sigma(\mathrm{d}x_l)  \\
    &+ \sum_{F\in\mathcal F(P)} \int\limits_{F^{l+1}} h(x_0,\ldots, x_l) \lambda_F(\mathrm{d}x_0) \ldots \lambda_F(\mathrm{d}x_l). 
\end{align*}
Applying \eqref{Zahle} to the first term and \eqref{BPF} to the second term we get the following:

\begin{align*}
     \int\limits_{(\partial P)^{l+1}} h(x_0,&\ldots, x_l) \sigma(\mathrm{d}x_0) \ldots \sigma(\mathrm{d}x_l)  \\
    & = (l!)^{d-l}b_{d,l}\int\limits_{A_{d,l}}
    \int\limits_{(E\cap  \partial P)^{l+1}} h(x_0, \ldots, x_l)
     \mathbbm{1}_{\{x_1, \ldots, x_l \,\text{are not all in the same facet}\}}  \\
     &\times |[x_0, \ldots, x_l]|^{d-l} 
    \prod_{j=0}^{l}\|\mathrm{P}_E(n_P(x_j))\|^{-1} 
     \sigma_{E\cap \partial P}(\mathrm{d}x_0)\ldots 
     \sigma_{E\cap \partial P}(\mathrm{d}x_l)\mu_{d,l}(\mathrm{d}E)  \\
     &+ (l!)^{d-l-1}b_{d-1,l}\cdot \sum_{F\in\mathcal F(P)} 
     \int\limits_{A_{F,l}}\int\limits_{(E \cap F)^{l+1}} h(x_0, \ldots, x_l)  \\
     &\times 
     |[x_0, \ldots, x_l]|^{d-l-1}\lambda_{E \cap F}(\mathrm{d}x_0)\ldots \lambda_{E \cap F}(\mathrm{d}x_l)\mu_{F,l}(\mathrm{d}E)  \\
     &= (l!)^{d-l}b_{d,l} 
     \sum_{{\substack{(F_0, \ldots, F_l)\in\mathcal F^{l+1}(P)\\ \exists F_i \ne F_j}}}
     \int\limits_{A_{d,l}}\int\limits_{E\cap F_0}\ldots
     \int\limits_{E\cap F_l} h(x_0, \ldots, x_l)  \\
     &\times
     |[x_0, \ldots, x_l]|^{d-l} \prod_{j=0}^{l}\|\mathrm{P}_E(n_P(x_j))\|^{-1} \lambda_{E \cap F_0 }(\mathrm{d}x_0)\ldots \lambda_{E \cap F_l}
     (\mathrm{d}x_l)\mu_{d,l}(\mathrm{d}E)  \\
     &+ (l!)^{d-l-1}b_{d-1,l}\cdot \sum_{F\in\mathcal F(P)} 
     \int\limits_{A_{F,l}}\int\limits_{(E \cap F)^{l+1}} h(x_0, \ldots, x_l)  \\
     &\times 
     |[x_0, \ldots, x_l]|^{d-l-1}\lambda_{E\cap F}(\mathrm{d}x_0)\ldots \lambda_{E \cap F}(\mathrm{d}x_l)\mu_{F,l}(\mathrm{d}E).
\end{align*}
This proves Theorem~\ref{thm4}.

\bibliographystyle{plain}
\bibliography{bib}

\end{document}